\newcommand{\neutralize}[1]{\expandafter\let\csname c@#1\endcsname\count@}
\newtheorem{theorem}{Theorem}
\newtheorem{question}{Question}
\newtheorem{lemma}{Lemma}
\newtheorem{conjecture}{Conjecture}
\theoremstyle{definition}
\theoremstyle{remark}
\newtheorem{remark}{Remark}
  \renewenvironment{proof}[1][Proof]%
  {%
   \par\noindent{\bfseries\upshape {#1.}\ }%
  }%
  {\qed\newline}
\xpatchcmd{\proof}{\itshape}{\normalfont\proofnameformat}{}{}
\newcommand{\proofnameformat}{\bfseries}
\Crefname{assumption}{Assumption}{Assumptions}
    \let\Cref\crtCref
    \let\cref\crtcref
\DeclareDocumentCommand{\XDeclarePairedDelimiter}{mm}
 {
  \__egreg_delimiter_clear_keys: % reset to the default
  \keys_set:nn { egreg/delimiters } { #2 }
  \use:x % we want to expand the values of the token variables set with the keys
   {
    \exp_not:n {\NewDocumentCommand{#1}{sO{}m} }
     {
      \exp_not:n { \IfBooleanTF{##1} }
       {
        \exp_not:N \egreg_paired_delimiter_expand:nnnn
         { \exp_not:V \l_egreg_delimiter_left_tl }
         { \exp_not:V \l_egreg_delimiter_right_tl }
         { \exp_not:n { ##3 } }
         { \exp_not:V \l_egreg_delimiter_subscript_tl }
       }
       {
        \exp_not:N \egreg_paired_delimiter_fixed:nnnnn 
         { \exp_not:n { ##2 } }
         { \exp_not:V \l_egreg_delimiter_left_tl }
         { \exp_not:V \l_egreg_delimiter_right_tl }
         { \exp_not:n { ##3 } }
         { \exp_not:V \l_egreg_delimiter_subscript_tl }
       }
     }
   }
 }
\XDeclarePairedDelimiter{\supnorm}{
  left=\lVert,
  right=\rVert,
  subscript=\infty
  }
\newcommand{\R}{\mathbb{R}} 
\newcommand{\E}{\mathbb{E}} 
\newcommand{\Sp}{\mathbb{S}^{d-1}}
\DeclareMathOperator{\vol}{vol}
\newcommand{\NN}{N^{-\frac{2}{d-1}}}
\DeclareMathOperator{\argmin}{argmin}
\def\ddefloop#1{\ifx\ddefloop#1\else\ddef{#1}\expandafter\ddefloop\fi}
\def\ddef#1{\expandafter\def\csname c#1\endcsname{\ensuremath{\mathcal{#1}}}}
\begin{document}
	
	\title{On the Optimality of  Random Partial Sphere Coverings \\in High Dimensions}
	
	\author{Steven Hoehner and Gil Kur}
	
	\renewcommand{\thefootnote}{\fnsymbol{footnote}} 
	\footnotetext{ \emph{2020 Mathematics Subject Classification.} 52C17 (52A27).\\  \emph{Key words and phrases.} Partial sphere covering, Gaussian correlation inequality, random polytopes, Gaussian surface area}     
	\renewcommand{\thefootnote}{\arabic{footnote}} 
	
	\maketitle

	\begin{abstract}
		Given $N$ geodesic caps on the unit sphere in $\mathbb{R}^d$, whose total normalized surface area is one, what is the maximal proportion of the sphere that their union can cover? In this work, we provide an asymptotically sharp upper bound  for an antipodal partial covering of the sphere by $N=N(d)$ congruent caps in the regime
$N(d)\to\infty$ and $\ln N(d)=o(\sqrt d)$, showing that the maximum proportion covered approaches $1 - e^{-1}$ as $d\to\infty$. We discuss the relation of this result to the optimality of random polytopes in high dimensions,  the limitations of our technique via the Gaussian surface area bounds of K. Ball and F. Nazarov, and its applications in computer science theory.
	\end{abstract}

	\section{Introduction and Main Results}

	In this note, we study the following partial covering problem for the sphere:
	
	\begin{question}
		Given $N$ geodesic caps on the unit sphere $\mathbb{S}^{d-1}\subset\mathbb{R}^d$, each covering a proportion $1/N$ of the surface area, what is the largest proportion of the sphere that can be covered?
	\end{question}
	
	This question traces back to a classical paper of Erd\H{o}s, Few and Rogers \cite{EFR}\footnote{Their formulation is  equivalent to ours when $N\ge c(d)$ for some sufficiently large $c(d)$; see Remark~\ref{R:EFR}.}. Variants of the problem have appeared in several contexts, including the study of random polytopes and geometric probability. In particular, the problem was highlighted in \cite{HK-DCG} in connection with the optimality of random polytopes; it was also posed by Glazyrin \cite{Glazyrin} and discussed by Aubrun and Szarek in their blog post \cite{ABMB-blog-post}.
	
	Even when both $d$ and $N$ are large, it is far from clear that one can arrange many such caps with little overlap.  For example, as discussed in \cite{ABMB-blog-post}, if one considers the packing problem for equal caps of the same geodesic
radius as a cap of measure $1/N$, then a theorem of Kabatjanski\u{\i} and Leven\v{s}te\u{\i}n \cite{KL-1978} implies that the maximal packing density is
at most $(2/3)^d$. Thus, disjoint packings are much too sparse to explain the
positive covering fraction obtained by the random placement of caps. 
	
	Motivated by the optimality of random constructions in high dimensions (see, e.g.,  \cite{alon2016probabilistic,AGA_Part1} and references within),   in \cite{HK-DCG} it was shown that if $N \geq 1$ and the centers of the caps are chosen uniformly and independently from the sphere, then with high probability, the proportion of the sphere that is covered is  \textbf{at least}  $1-e^{-1}+O(N^{-1})$, or  about 63.2\% of the sphere. Surprisingly, this estimate  is dimension-free. This naturally raises the question: \emph{Is the random configuration asymptotically optimal as $d,N\to\infty$?} A complementary probabilistic refinement of this random model was recently obtained in \cite{Hoehner-Thale-CLT}, where a central limit theorem and a quantitative Kolmogorov distance bound were proved for the covered volume of random partial sphere coverings. In the opposite direction, Erd\H{o}s, Few and Rogers \cite{EFR} showed that for $d\ge 2^{20}$, even with an  optimal deterministic configuration, the covered proportion $\vartheta_d(1)$ cannot exceed approximately $0.92334$ (see Remark \ref{R:EFR} below). Thus, the optimal high-dimensional covering fraction remains somewhere between $0.632$ and $0.92334$.
	
	%Due to the symmetry of the sphere and rotation invariance of the surface area measure, this restriction is geometrically natural, and in the high-dimensional regime, it does not significantly affect the asymptotic coverage since random partial coverings already exhibit strong measure-concentration symmetry (see \cite{HK-DCG}).
	
	In this note, we establish the asymptotic optimality of the random configuration under an additional but natural symmetry assumption, namely, that the caps occur in \emph{antipodal pairs}.  The antipodality assumption is particularly useful because it allows us to invoke the \emph{Gaussian Correlation Inequality} (GCI). In our case, it suffices to use a weaker version of the GCI, known as \v{S}id\'ak's inequality, since %, which applies to symmetric slabs in $\mathbb{R}^d$. 
	% For more background on convex geometry, we refer the reader to the monographs   \cite{keithball, SchneiderBook, VershyninBook}. 
    % For background on sphere covering and packing, we refer the reader to  \cite{aubrun2017alice,Boroczky-Book}.
	our approach relates the spherical covering problem to Gaussian measures of symmetric slabs in $\mathbb{R}^d$. Exploiting the thin-shell concentration of the Gaussian measure around $\sqrt{d}\,\mathbb{S}^{d-1}$, we choose the slab thickness to correspond  to the height of the spherical caps. We then prove that when the number of caps grows at rate of $\exp(o(\sqrt{d}))$, the random antipodal configuration achieves the optimal asymptotic coverage.
	
	Given $d,N\ge 3$ where $N$ is even, consider an antipodal configuration $\pm x_1,\ldots,\pm x_{N/2}\in\mathbb{S}^{d-1}$. Let $C(x_i)$ denote the spherical cap centered at $x_i$ such that $\sigma_{d-1}(C(x_i))=N^{-1}$, where $\sigma_{d-1}$ is the normalized $(d-1)$-dimensional surface measure on $\mathbb{S}^{d-1}$.  Define
	
	\begin{equation}\label{VNd-main}
	V_N(d):=\max_{\pm x_1,\ldots,\pm x_{N/2}\in\mathbb{S}^{d-1}}
	\sigma_{d-1}\!\left(\bigcup_{i=1}^{N/2}\big[C(x_i)\cup C(-x_i)\big]\right),
	\end{equation}
	which is the optimal covered proportion of the sphere by antipodal pairs of equal caps, each cap having measure $1/N$ and each pair having total measure $2/N$. Our main result is the following
	
	\begin{theorem}\label{mainThm-corrected}
		Let $N=N(d)$ be an even integer sequence such that $N(d)\to\infty$ and $\ln N(d)=o(\sqrt d)$. Then,
		\[
		\lim_{d\to\infty} V_N(d) = 1 - e^{-1}.
		\]
	\end{theorem}
	Theorem~\ref{mainThm-corrected} thus provides a new geometric interpretation of Euler’s number $e$:
	\begin{equation}\label{Eulers}
		e = \frac{1}{1 - \lim_{d\to\infty} V_N(d)}.
	\end{equation}
	Equation \eqref{Eulers} shows that, like $\pi$, Euler's number $e$ may also be defined in terms of the geometry of the sphere in high dimensions.  Theorem \ref{mainThm-corrected} also confirms \cite[Conjecture 4.2]{HK-DCG} in the case of antipodal partial coverings by $N=N(d)$  caps satisfying $N(d)\to\infty$ and $\ln N(d)=o(\sqrt d)$, showing that, under these assumptions, random partial coverings are indeed asymptotically optimal (up to a $1+o(1)$ factor). 
    
    To establish the equality in Theorem \ref{mainThm-corrected}, we show the inequality in both directions. For the lower bound, we use random antipodal partial coverings. We follow the arguments in  \cite{HK-DCG}, with slight modifications made since our partial coverings are antipodal. We refer the reader to \cite[Subsection 9.1]{HK-DCG} for the details. Thus, the main contribution of this paper is the upper bound in Theorem \ref{mainThm-corrected}.
    
    Nevertheless, the present paper leaves several open directions. In particular, we do not know how to remove the antipodality assumption, or how to extend the result to the regime $N=e^{\Omega(\sqrt{d})}$. Our method also does not yet resolve whether $V_N(d)$ decreases monotonically to $1-e^{-1}$, 
	a property that would describe how overlaps grow with increasing dimension.
	
	\begin{remark}\label{R:EFR}
		In \cite{EFR}, Erd\H{o}s, Few and Rogers study the following asymptotic version of the partial covering problem.  For each $r>0$, consider a finite family $\Sigma_r$ of congruent caps on $r\mathbb{S}^{d-1}$, and suppose that 
        \[
\sum_{C\in\Sigma_r}\operatorname{vol}_{d-1}(C)
   \leq\delta\operatorname{vol}_{d-1}(r\mathbb{S}^{d-1}).
        \]
        Define
		\[
        \vartheta_d(\delta)= 
 \limsup_{r\to\infty}\sup_{\Sigma_r}
 \frac{\operatorname{vol}_{d-1}\big(\bigcup_{C\in\Sigma_r} C\big)}
      {\operatorname{vol}_{d-1}(r\mathbb{S}^{d-1})},
		\]
        where the supremum is taken over all finite families $\Sigma_r$
of congruent caps satisfying the preceding density constraint. 
		In the case $\delta=1$, Erd\H{o}s, Few and Rogers prove that if $d\geq 2^{20}$ and $\sum_{C\in\Sigma_r} \operatorname{vol}_{d-1}(C)=\operatorname{vol}_{d-1}(r\mathbb{S}^{d-1})$, then $\vartheta_d(1)$ cannot exceed  $0.92334$ (approximately). Our result provides an asymptotic lower bound on this quantity (see also \cite{HK-DCG}), showing that $\liminf_{d\to\infty}\vartheta_d(1) \geq 1 - e^{-1} \approx 0.632$.
	\end{remark}
	
	The proof of Theorem \ref{mainThm-corrected} is given in Section \ref{main-proof-sec}. A key ingredient is \v{S}id\'ak's Lemma (see Lemma \ref{Sidaks-lemma} below), which states that origin-symmetric slabs are positively correlated in Gaussian space $(\R^d,\gamma_d)$. \v{S}id\'ak's Lemma is a special case of the celebrated Gaussian Correlation Inequality due to Royen \cite{royen}, which states that all origin-symmetric convex bodies are positively correlated in $(\R^d,\gamma_d)$. For recent developments related to the Gaussian Correlation Inequality and its refinements, we refer the reader to the recent work of Milman \cite{Milman-GCI-IBL} and Milman, Nakamura and Tsuji \cite{milman2026gaussian}, and to the refinement of the \v{S}id\'ak--Khatri inequality and related strengthened Gaussian correlation conjectures in \cite{ACS-Sidak-Khatri}.
	
	\subsection{Notation and Definitions}\label{Backgroundsec}
	
	 For vectors $x=(x_1,\ldots,x_d),y=(y_1,\ldots,y_d)\in\R^d$, we use the standard inner product $\langle x,y\rangle=\sum_{i=1}^d x_i y_i$, and the Euclidean norm of $x=(x_1,\ldots,x_d)\in\R^d$ is $\|x\|=\sqrt{\langle x,x\rangle}=\sqrt{\sum_{i=1}^d x_i^2}$. 
	The $d$-dimensional Euclidean unit ball centered at the origin is denoted $B_d$, and its boundary  $\partial B_d=\Sp$ is the unit sphere in $\R^d$ centered at the origin. The $d$-dimensional volume of $B_d$ is $\vol_d(B_d)=\frac{\pi^{d/2}}{\Gamma\left(\frac{d}{2}+1\right)}$, where for $x>0$,  $\Gamma(x)=\int_0^\infty t^{x-1}e^{-t}\,dt$ is the Gamma function. By the cone-volume formula, we have $\vol_{d-1}(\partial B_d)=d\vol_d(B_d)$. One may estimate the volume of the $d$-dimensional Euclidean ball using Stirling's inequality 
	\begin{equation}\label{stirling}
		\sqrt{2\pi x}\left(\frac{x}{e}\right)^x\leq\Gamma(x+1) \leq  \sqrt{2\pi x}\left(\frac{x}{e}\right)^x e^{\frac{1}{12x}},\qquad x\geq 1.
	\end{equation}
  Throughout the paper, the notation $f(d)\sim g(d)$ means that the  functions $f(d)$ and $g(d)$ are asymptotically equivalent, i.e., $\lim_{d\to\infty}\frac{f(d)}{g(d)}=1$ (and hence $f(d)=(1+o(1))g(d)$).
    
	 Recall that for a Borel set $A\subset \R^d$, the Gaussian measure $\gamma_d$ is defined by
	\[
	\gamma_d(A) = \frac{1}{(2\pi)^{d/2}}\int_A e^{-\|x\|^2/2}\,dx.
	\]
	We denote the standard normal cumulative distribution function by $\Phi(z)=\gamma_1((-\infty,z])$. The main ingredient we need to prove the upper bound in Theorem \ref{mainThm-corrected} is \v{S}id\'ak's Lemma (see, e.g., \cite[Lemma 2]{Ball-2001}). A \emph{symmetric slab} in $\R^d$ is a set of the form $\{x\in\R^d:\,|\langle x,u\rangle|\leq t\}$, where $u\in\mathbb{S}^{d-1}$ and $t>0$.
	
	\begin{lemma}[\v{S}id\'ak's Lemma]\label{Sidaks-lemma}
		Let $m\in\mathbb{N}$. If $K_1,\ldots,K_m$ are symmetric slabs in $\R^d$, then
		\begin{equation}\label{Sidak-ineq}
			\gamma_d\left(\bigcap_{i=1}^m K_i\right)\geq \prod_{i=1}^m \gamma_d(K_i).
		\end{equation}
	\end{lemma}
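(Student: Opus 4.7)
The plan is to proceed by induction on $m$, where the base case $m=1$ is trivial. For the inductive step, it suffices to prove the following two-set statement: for any symmetric slab $K\subset\R^d$ and any origin-symmetric convex body $L\subset\R^d$,
\[
\gamma_d(K\cap L)\geq\gamma_d(K)\,\gamma_d(L).
\]
Indeed, this can then be applied with $K = K_m$ and $L = \bigcap_{i=1}^{m-1}K_i$ (which is origin-symmetric and convex, as an intersection of symmetric slabs), and combined with the inductive hypothesis, to obtain \eqref{Sidak-ineq}. By the rotational invariance of $\gamma_d$, I may also assume without loss of generality that $K = \{x\in\R^d : |x_1|\leq t_0\}$ for some $t_0 > 0$.

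Define $\phi(t) := \gamma_d(\{|x_1|\leq t\}\cap L)$ and $\Psi(t) := \gamma_d(\{|x_1|\leq t\})$, and set $L_s := \{y\in\R^{d-1}:(s,y)\in L\}$, where $\varphi$ denotes the standard Gaussian density on $\R$. The two-set claim reduces to $\phi(t_0)/\Psi(t_0)\geq\gamma_d(L) = \lim_{t\to\infty}\phi(t)/\Psi(t)$, which will follow once I show that $t\mapsto\phi(t)/\Psi(t)$ is non-increasing on $(0,\infty)$. By Fubini, $\phi(t) = \int_{-t}^{t}\gamma_{d-1}(L_s)\varphi(s)\,ds$; using the symmetry of $L$, this gives $\phi'(t) = 2\varphi(t)\gamma_{d-1}(L_t)$, while $\Psi'(t) = 2\varphi(t)$.

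The main ingredient, and chief obstacle, is to show that $s\mapsto\gamma_{d-1}(L_s)$ is non-increasing in $|s|$. This follows from the Pr\'ekopa--Leindler inequality: the map $(s,y)\mapsto\mathbf{1}_L(s,y)e^{-|y|^2/2}$ is log-concave on $\R^d$ (as $L$ is convex and the Gaussian density is log-concave), so its marginal $s\mapsto\gamma_{d-1}(L_s)$ is log-concave on $\R$. Together with its evenness (from the symmetry of $L$), a short concavity argument on $\log\gamma_{d-1}(L_s)$ then forces this marginal to attain its maximum at $s=0$ and to be non-increasing on $[0,\infty)$. Hence $\gamma_{d-1}(L_s)\geq\gamma_{d-1}(L_t)$ whenever $|s|\leq t$, which upon integration yields $\phi(t)\geq\gamma_{d-1}(L_t)\Psi(t)$. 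Consequently,
\[
\frac{d}{dt}\log\frac{\phi(t)}{\Psi(t)} \;=\; 2\varphi(t)\left(\frac{\gamma_{d-1}(L_t)}{\phi(t)}-\frac{1}{\Psi(t)}\right)\;\leq\;0,
\]
which establishes the claimed monotonicity and thus the two-set inequality, closing the induction.
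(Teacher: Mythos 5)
Your proof is correct. Note that the paper does not prove this lemma at all: it is quoted as a known result with a pointer to \cite[Lem. 2]{Ball-2001}, so there is nothing to compare against except the classical argument, and what you have written is essentially that classical argument (Šidák's original reduction): induct on $m$, reduce to the two-set inequality $\gamma_d(K\cap L)\ge\gamma_d(K)\gamma_d(L)$ for a slab $K$ and a symmetric convex $L$, and prove it by showing $t\mapsto\phi(t)/\Psi(t)$ is non-increasing, with the key input being that the slice measure $s\mapsto\gamma_{d-1}(L_s)$ is even and log-concave by Pr\'ekopa--Leindler, hence non-increasing in $|s|$. All the steps check out: the identity $\phi'(t)=2\varphi(t)\gamma_{d-1}(L_t)$ uses $\gamma_{d-1}(L_{-t})=\gamma_{d-1}(L_t)$ correctly, the bound $\phi(t)\ge\gamma_{d-1}(L_t)\Psi(t)$ follows by integrating the monotonicity of the slices, and the sign of $\frac{d}{dt}\log(\phi/\Psi)$ follows. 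Two cosmetic points: the clause defining $\varphi$ is attached to the wrong sentence (it belongs with the Fubini formula), and you should remark that the degenerate case $\gamma_d(L)=0$ is trivial (otherwise $\phi(t)>0$ for $t>0$ since $L$ contains a neighborhood of the origin, so the logarithmic derivative is legitimate).
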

    
	\subsection{Proof Outline}
The main idea of the proof of Theorem \ref{mainThm-corrected} is to replace spherical geometry by Gaussian geometry. This is natural because the Gaussian measure is rotationally invariant, and because the uniform measure on the sphere arises as the angular part of a Gaussian vector. Let $Y\sim N(0,I_d)$, and write $Y=RU$, where $R=\|Y\|$ and $U=Y/\|Y\|\in\Sp$. Then $U$ is uniformly distributed on the sphere and is independent of $R$.

A spherical cap on $\Sp$ therefore corresponds naturally to a cone in $\R^d$: given a cap $C\subset\Sp$, we define its associated cone $A=\{ru:\,r\geq 0, u\in C\}$. The probability that a random direction lies in the cap $C$ is exactly the probability that $Y$ lies in the cone $A$. By rotational invariance, we may assume that the center of the cap is the north pole $e_d$. Accordingly, the cone has axis $\R e_d$. We define $t_{d,N}$ to be the Gaussian quantile satisfying 
\[
\Phi(-t_{d,N})=\gamma_1([t_{d,N},\infty))=\frac{1}{N}.
\]
Geometrically, $t_{d,N}$ is the height of the halfspace $H^+=\{x\in\R^d:\,\langle x,e_d\rangle\geq t_{d,N}\}$ with Gaussian measure $1/N$. Our eventual goal is to apply \v{S}id\'ak's lemma, which applies to symmetric slabs (or, equivalently, complements of halfspaces). To do so, we need to relate the cone $A$ to a Gaussian halfspace of height approximately $t_{d,N}$. 

    Two quantities play a central role: 
	\begin{itemize}
		\item $A_{d,N}$: the height of a spherical cap on  $\sqrt{d} \cdot \mathbb{S}^{d-1}$ whose normalized surface area is $1/N$; and
		\item $t_{d,N}$, the Gaussian quantile defined above. 
	\end{itemize}
    Although these arise from different geometric settings, they turn out to be very close after the natural scaling by $\sqrt{d}$.  
	\begin{lemma}\label{Lem:Caps}
		Let  $N:=N(d)\to\infty$, and write $f(d)=\ln N(d)$. Assume that  $f(d)\to\infty$ and $f(d)/\sqrt{d}\to 0$ as $d\to\infty$. Then
		\[
		\delta_{d,N}:= |t_{d,N} - A_{d,N}| = \frac{f(d)^{3/2}}{\sqrt{2}\,d}
		+o\!\left(\frac{1}{\sqrt{f(d)}}+\frac{f(d)^{3/2}}{d}\right) = o(f(d)^{-1/2}).
		\]
		Moreover,
        \[
        t_{d,N} =
		\sqrt{2f(d)}
		-\frac{\ln(4\pi f(d))}{2\sqrt{2f(d)}} + o(f(d)^{-1/2}).
        \]
	\end{lemma}
	This lemma shows that spherical caps of area $1/N$ and Gaussian halfspaces of measure $1/N$ have nearly the same height, up to the small error $\delta_{d,N}$. The proof combines the standard Gaussian tail quantile asymptotics with a direct integration by parts estimate for the spherical cap integral.
    
    To compare the cone $A$ with a Gaussian halfspace, we ``trim'' the cone at some height above $t_{d,N}$. The idea is to discard a negligible portion of the cone near its tip, while ensuring that the remaining part behaves like a halfspace of measure $1/N$. We define the trimmed height 
    \[
    t_{d,N}' := t_{d,N}+s_d
    \]
    where $s_d>0$ is chosen so that $\sqrt{f(d)}\cdot s_d\to 0$, $\frac{\sqrt{d}}{\sqrt{f(d)}}\cdot s_d\to\infty$, and $|t_{d,N}-A_{d,N}|=o(s_d)$ as $d\to\infty$. Such a choice is possible because, by Lemma \ref{Lem:Caps}, $|t_{d,N}-A_{d,N}|=o(f(d)^{-1/2})$, while $\sqrt{f(d)/d}=o(f(d)^{-1/2})$. The first condition ensures that the one-dimensional Gaussian tail does not change, i.e.,
    \[
    \gamma_1([t_{d,N}',\infty))=(1+o(1))\gamma_1([t_{d,N},\infty))=\frac{1+o(1)}{N}.
    \]
    The second condition ensures that the geometry of the cone is favorable in high dimensions. 

    Using Fubini's theorem, the Gaussian measure of the trimmed cone can be written as
    \[
\gamma_d(A\cap\{x_d\geq t_{d,N}'\}) =\frac{1}{\sqrt{2\pi}}\int_{t_{d,N}'}^\infty e^{-t^2/2}\Pr(\|Y'\|\leq t\, r_{d,N})\,dt
    \]
    where $Y'\sim N(0,I_{d-1})$ and $r_{d,N}$ is the radius of the $(d-1)$-dimensional cross-section of the cone at height 1. Since
\[
r_{d,N}=\frac{\sqrt{1-\alpha_{d,N}^2}}{\alpha_{d,N}}
=(1+o(1))\sqrt{\frac{d}{2f(d)}},
\]
we have
\[
s_d r_{d,N}=(1+o(1))s_d\sqrt{\frac{d}{2f(d)}}.
\]
Let $\eta_d:=t_{d,N}-A_{d,N}$. By the choice of $s_d$, we have $|\eta_d|=o(s_d)$. Hence,
\[
\frac{t_{d,N}+s_d}{A_{d,N}}
=\frac{t_{d,N}+s_d}{t_{d,N}-\eta_d}
=1+(1+o(1))\frac{s_d}{t_{d,N}}
=1+\Theta\!\left(\frac{s_d}{\sqrt{f(d)}}\right).
\]
Since $\alpha_{d,N}^2=O(f(d)/d)$, it follows that
\[
(t_{d,N}+s_d)r_{d,N}
=\sqrt d+\Theta\!\left(\frac{\sqrt d}{\sqrt{f(d)}}s_d\right)+o(1).
\]
By the assumption $s_d\sqrt{d/f(d)}\to\infty$, this implies
\[
(t_{d,N}+s_d)r_{d,N}\geq\sqrt{d}+\omega(1).
\]
Since $t\,r_{d,N}$ increases with $t$, the same lower bound holds for every $t\geq t_{d,N}'$. Thus,
\[
t\,r_{d,N}\geq\sqrt{d}+\omega(1) \quad\text{uniformly for all }t\geq t_{d,N}'.
\]
    By Gaussian concentration for Lipschitz functions (applied to the norm $\|Y'\|$), the random variable $\|Y'\|$ is sharply concentrated around $\sqrt{d-1}$. Consequently,
    \[
\Pr(\|Y'\|\leq t\,r_{d,N}) =1-o(1)\quad\text{uniformly for all }t\geq t_{d,N}'.
    \]
    Substituting this into the Fubini integral yields
    \[
\gamma_d(A\cap\{x_d\geq t_{d,N}'\}) = (1+o(1))\gamma_1([t_{d,N}',\infty))=\frac{1+o(1)}{N}.
    \]
    This establishes the desired equivalence between the trimmed cone and the Gaussian halfspace.

    Our reduction shows that, after trimming at height $t_{d,N}'$, the cone generated by a spherical cap of area $1/N$ behaves like a Gaussian halfspace of measure $1/N$. This finally allows us to apply \v{S}id\'ak's lemma to symmetric slabs and transfer the resulting bounds back to the original spherical problem. The competing requirements on the trimming parameter $s_d$ explain why our method applies precisely in the regime $f(d)=o(\sqrt{d})$, and why this approach cannot be extended beyond that scale.
	\section{Discussion}
	\paragraph{On the Threshold of $\ln N  = o(\sqrt{d})$:}
	One may ask whether the restriction $\ln N=o(\sqrt d)$ is merely an artifact of accumulating error terms in the order-statistic and integration-by-parts estimates, or whether it reflects a genuine limitation of the Gaussian reduction. The proof of our main result is based on   special properties of the Gaussian measure which are not true for the unit sphere and the volume measure. Specifically, we propose the following explanation via  \emph{Gaussian surface area} (also called \emph{Gaussian perimeter}).  For any Borel set $K\subset\mathbb{R}^d$, it is defined as
\[
\operatorname{GSA}(K)
\;:=\;
\liminf_{\varepsilon\downarrow 0}\,
\frac{\gamma_d\!\big(K+\varepsilon B_d\big)-\gamma_d(K)}{\varepsilon}.
\]	
The seminal results of \cite{ball1993reverse,nazarov2004maximal} showed a sharp bound for $\operatorname{GSA}(K)$ of the order  $O(d^{1/4})$ when $K$ is convex and symmetric. 
By the recent result of \cite{de2024gaussian} (see also \cite{klivans2008learning} and references within), one can approximate  $K = (\sqrt{d} + O(1))B_d$ with $\exp(C\sqrt{d})$ facets up to an absolute constant accuracy, with respect to the Gaussian measure of the set difference. 

This bound is in sharp contrast to the volume measure that requires at least $\exp(cd)$ facets to obtain a constant approximation under the symmetric difference metric.  As our proof is based on random approximation of the ball via a symmetric polytope, in a Gaussian space, one may apply such a reduction only when $\ln N = o(\sqrt{d})$; as $\ln N \gtrsim \sqrt{d}$, we cannot use the properties of Gaussian measure, where the approximation rate behaves differently from that of the volume measure. Roughly speaking, in this regime, the ball and the polytopes look the same under Gaussian measure. Therefore, to extend Theorem \ref{mainThm-corrected} to the $\ln N = \Omega(\sqrt{d})$ regime, one needs to find a different approach. 

Another question is how to relax the symmetric covering assumption. We believe that it is an artifact of \v{S}id\'ak's inequality, and is not essential to the underlying argument.
	
	\paragraph{On the Optimality of Random Polytopes:}
		The approximation of convex bodies by \emph{arbitrarily positioned} polytopes with $N$ facets (or $N$ vertices) has recently been studied; see, e.g., \cite{BH-2024,BHK,Boroczky2004,GTW2021,GW2018,HSW,HSW-2025,Kur2017,ludwig1999,LSW}. In this model, there is no restriction on the relative positions of the body $K$ and the approximating polytope $P$.  This approach is mainly motivated by the fact that, when the polytope is in arbitrary position, the rate of approximation is significantly faster than in models imposing a strict inclusion. In fact, dropping the restriction improves the estimate by a factor of dimension, and requires only exponentially many facets, rather than superexponentially many. For more background, we refer the reader to, e.g., \cite{HK-DCG,Kur2017,LSW} and the references therein. 
	
	In the arbitrary position approximation model,  for every fixed convex body $K\subset\R^d$ and every  $N\geq d+1$, there exists a best-approximating polytope
	\begin{equation*}
		P_{K,N}^{\rm b}\in \argmin\left\{\vol_d(P \triangle K):\,P\subset\R^d \text{ is a polytope with at most $N$ facets}\right\},
	\end{equation*}
	where the ``b'' superscript indicates a best-approximating polytope in arbitrary position. In this setting, Ludwig \cite{ludwig1999} proved that for any $C^2$ convex body $K$ in $\R^d$ with positive Gaussian curvature $\kappa$,
	\begin{equation}\label{Eq:ldiv}
		\lim_{N\to\infty}\frac{\vol_d(P_{K,N}^{\rm b}\triangle K)}{\NN}=\frac{{\rm ldiv}_{d-1}\cdot{\rm as}(K)^{\frac{d+1}{d-1}}}{2},
	\end{equation}
	where ${\rm as}(K)=\int_{\partial K}\kappa(x)^{\frac{1}{d+1}}\,d\mu_{\partial K}(x)$ is the affine surface area of $K$ (see, e.g., \cite{SchneiderBook,SW-affine-SA}) and $\mu_{\partial K}$ is the usual surface area measure of $K$, and ${\rm ldiv}_{d-1}$ is the \emph{Laguerre--Dirichlet--Voronoi tiling number} in $\R^{d-1}$. 
 For general $d\geq 4$, the exact value of ${\rm ldiv}_{d-1}$ is unknown \cite{BL-1999,ludwig1999}. Ludwig, Sch\"utt and Werner \cite{LSW}  showed that ${\rm ldiv}_{d-1}\geq c_1$ for some absolute constant $c_1 > 0$ (see also \cite{Kur2017}).  On the other hand, Kur \cite{Kur2017} used  random polytopes to show that
	\begin{equation}\label{Eq:ldivupper}
		{\rm ldiv}_{d-1} \leq (\pi e)^{-1}\left(\int_{0}^{1}t^{-1}(1-2^{-t})\,dt+\int_{0}^{\infty}2^{-e^{t}}\,dt\right) + o_d\left(1\right).
	\end{equation}	
	
	The partial covering problem for $\Sp$ is closely related to the problem of approximating the Euclidean unit ball $B_d$ by arbitrarily positioned  polytopes with $N$ facets; see \cite{Matousek-2002}, \cite{Boroczky-Wintsche-2003} and \cite{HK-DCG}. In light of the results in this paper, we believe that the upper bound \eqref{Eq:ldivupper} is optimal.
	
	\begin{conjecture}\label{C:RndPoly}
		The following estimate holds for ${\rm ldiv}_{d-1}$ as defined in \eqref{Eq:ldiv}:
		\[
		{\rm ldiv}_{d-1} = (\pi e)^{-1}\left(\int_{0}^{1}t^{-1}(1-2^{-t})\,dt+\int_{0}^{\infty}2^{-e^{t}}\,dt\right) + o_d\left(1\right).
		\]
	\end{conjecture}
	If true, Conjecture \ref{C:RndPoly} would imply that the optimal approximation of the Euclidean ball is achieved  (on average) by  an intersection of $N=e^{f(d)}$  random slabs  up to an error of $o_d(1)$, meaning random constructions are  optimal in the arbitrary setting.

	\section{Proof of Theorem \ref{mainThm-corrected}}\label{main-proof-sec}
    
The lower bound was proved in \cite[Subsection 9.1]{HK-DCG}  via a random construction. It remains to prove the upper bound. To begin, fix some $x\in\Sp$. The cap $C(x)$ centered at $x$ of measure $1/N$ can be written as
\[
C(x)=\{u\in\Sp:\, \langle u,x\rangle\geq\alpha_{d,N}\},
\]
where $\alpha_{d,N}\in(0,1)$ is chosen so that $\sigma_{d-1}(C(x))=1/N$. (Thus, $A_{d,N}=\alpha_{d,N}\sqrt{d}$.) Now take a standard Gaussian random vector $Y\sim\gamma_d$. In polar form, $Y=RU$ where $R=\|Y\|\geq 0$ and $U=Y/\|Y\|\in\Sp$; more specifically, $U\sim\sigma_{d-1}$ is uniform on the sphere $\Sp$, $R^2\sim\chi^2(d)$, and $U$ and $R$ are independent.

Define the cone $A(x)$ generated by the cap $C(x)$ to be its positive hull, i.e.,
\begin{equation}
    A(x)=\operatorname{pos}C(x)=\{\lambda u:\,u\in C(x),\lambda\geq0\}. 
\end{equation}
Then $U\in C(x)$ if and only if $Y\in A(x)$, since $Y=RU$ lies in the cone precisely when its direction $U$ lies in the cap. Thus, we can reduce the partial sphere covering problem to  a Gaussian coverage of cones:
\begin{equation}
    \sigma_{d-1}\left(\bigcup_{i=1}^{N/2}[C(x_i)\cup C(-x_i)]\right)=\Pr\left(Y\in\bigcup_{i=1}^{N/2}[A(x_i)\cup A(-x_i)]\right).
\end{equation}

For a fixed direction $x$, consider the (one-dimensional) random variable $\langle Y,x\rangle\sim N(0,1)$. Recall that we defined the Gaussian quantile $t_{d,N}$ by $\Phi(-t_{d,N})=1/N$. Then the halfspace
\[
H^+(x):=\{y\in\R^d:\,\langle y,x\rangle\geq t_{d,N}\}
\]
has $d$-dimensional Gaussian measure $1/N$. Since we are considering antipodal partial coverings, in  the Gaussian setting we consider the two-sided exceedance
\[
H^+(x)\cup H^+(-x)=\{y\in\R^d:\,|\langle y,x\rangle|\geq t_{d,N}\}.
\]
Note that $\gamma_d(H^+(x)\cup H^+(-x))=2/N$. By De Morgan's law, the complement is the origin-symmetric slab
\[
K(x):=[H^+(x)\cup H^+(-x)]^c=H^-(x)\cap H^-(-x)=\{y\in\R^d:\,|\langle y,x\rangle| \leq t_{d,N}\}
\]
where the last equality holds up to a set of Gaussian measure zero. Hence, $\gamma_d(K(x))=1-2/N$. 

Let $\eta_d:=t_{d,N}-\alpha_{d,N}\sqrt d$. By Lemma \ref{Lem:Caps}, $|\eta_d|=o(f(d)^{-1/2})$. Choose $s_d>0$ so that
\[
\sqrt{f(d)}\,s_d\to0,\qquad
s_d\sqrt{\frac d{f(d)}}\to\infty,\qquad
|\eta_d|=o(s_d).
\]
Such a choice is possible because $|\eta_d|=o(f(d)^{-1/2})$ and $\sqrt{f(d)/d}=o(f(d)^{-1/2})$. Define the trimmed Gaussian threshold $t_{d,N}':=t_{d,N}+s_d$.

By rotational invariance, it suffices to prove the following estimate for the cap centered at $e_d$.  Set
\[
A:=A(e_d)=\operatorname{pos}C(e_d).
\]
Writing $Y=(Y',Y_d)$ with $Y_d\sim N(0,1)$ and $Y'\sim N(0,I_{d-1})$ independent, we have
\[
A=\{(y',t):\,t\geq 0,\, \|y'\|\leq t\,r_{d,N}\}.
\]
Also, recall that
\[
r_{d,N}=\frac{\sqrt{1-\alpha_{d,N}^2}}{\alpha_{d,N}}
\]
is the radius of the cross-section of the cone at height $t=1$. By Fubini's theorem,
\begin{align}\label{fubini-1}
    \gamma_d(A\cap\{x_d\geq t_{d,N}'\}) = \frac{1}{\sqrt{2\pi}}\int_{t_{d,N}'}^\infty e^{-t^2/2}\Pr(\|Y'\|\leq t\cdot r_{d,N})\,dt.
\end{align}
We aim to show that
\begin{equation}\label{to-be-shown}
      \gamma_d(A\cap\{x_d\geq t_{d,N}'\})=(1-o(1))\Phi(-t_{d,N}').
\end{equation}
By the Pythagorean theorem, $r_{d,N}=\frac{\sqrt{1-\alpha_{d,N}^2}}{\alpha_{d,N}}$, so for any $t\geq t_{d,N}'$ we have
\begin{equation}\label{radius-est}
    t\cdot r_{d,N} = t\cdot \frac{\sqrt{1-\alpha_{d,N}^2}}{\alpha_{d,N}}\geq t_{d,N}'\cdot \frac{\sqrt{1-\alpha_{d,N}^2}}{\alpha_{d,N}}=\sqrt{d}\cdot \frac{t_{d,N}+s_d}{\alpha_{d,N}\sqrt{d}}\cdot\sqrt{1-\alpha_{d,N}^2}.
\end{equation}
Since $\alpha_{d,N}\sqrt d=t_{d,N}-\eta_d$, we have
\begin{equation}\label{middle-term}
    \frac{t_{d,N}+s_d}{\alpha_{d,N}\sqrt d}
    =\frac{t_{d,N}+s_d}{t_{d,N}-\eta_d}
    =1+\frac{s_d+\eta_d}{t_{d,N}-\eta_d}
    =1+(1+o(1))\frac{s_d}{t_{d,N}}
    =1+\Theta\!\left(\frac{s_d}{\sqrt{f(d)}}\right),
\end{equation}
where we used $|\eta_d|=o(s_d)$ and $t_{d,N}\asymp\sqrt{f(d)}$. Also, Lemma \ref{Lem:Caps} gives $\alpha_{d,N}^2=O(f(d)/d)$, and therefore
\begin{equation}\label{sqrt-term}
    \sqrt{1-\alpha_{d,N}^2}=1+O\left(\frac{f(d)}{d}\right).
\end{equation}
Substituting \eqref{middle-term} and \eqref{sqrt-term} into \eqref{radius-est}, we obtain, uniformly for all $t\geq t_{d,N}'$,
\begin{align*}
    t\cdot r_{d,N}
    &\geq \sqrt d\left(1+\Theta\!\left(\frac{s_d}{\sqrt{f(d)}}\right)\right)
    \left(1+O\left(\frac{f(d)}{d}\right)\right)
    =\sqrt d+\Theta\!\left(\frac{\sqrt d}{\sqrt{f(d)}}s_d\right)+o(1).
\end{align*}
Indeed, the error coming from $O(f(d)/d)$ contributes only $O(f(d)/\sqrt d)=o(1)$, while the product of $O(f(d)/d)$ with the positive term $\Theta(s_d/\sqrt{f(d)})$ is lower order. Since $s_d\sqrt{d/f(d)}\to\infty$, we obtain
\begin{equation}\label{eq:eq-16}
    t\cdot r_{d,N} \geq t_{d,N}'r_{d,N}=\sqrt{d}+\Theta\!\left(\frac{\sqrt{d}}{\sqrt{f(d)}}\cdot s_d\right)+o(1).
\end{equation}

Since $\|Y'\|\sim\chi_{d-1}$ is concentrated in a thin shell of width $O(1)$ around $\sqrt{d-1}$, we obtain the uniform bound 
\begin{equation}\label{uniform-bound-sup}
    \sup_{t\geq t_{d,N}'}\Pr(\|Y'\|>t\cdot r_{d,N})=o(1).
\end{equation}
We verify this. Recall that if $Z\sim N(0,I_m)$ and $f:\R^m\to\R$ is 1-Lipschitz, then for every $u\geq 0$, we have $\Pr(f(Z)\geq \E[f(Z)]+u)\leq Ce^{-u^2/2}$ (see, e.g., \cite[Theorem 5.2.2]{VershyninBook}). Note that with  $f(x)=\|x\|$, we have $|f(x)-f(y)|=\big|\|x\|-\|y\|\big|\leq \|x-y\|$, so $f$ is 1-Lipschitz. Hence, $\Pr(\|Z\|\geq\E[\|Z\|]+u)\leq Ce^{-u^2/2}$. Note that by the Cauchy-Schwarz inequality, $\E[\|Z\|]\leq \sqrt{\E[\|Z\|^2]}=\sqrt{m}$. Hence, $\{\|Z\|\geq\sqrt{m}+u\}\subset\{\|Z\|\geq\E[\|Z\|]+u\}$, which implies
\[
\Pr(\|Z\|\geq\sqrt{m}+u) \leq \Pr(\|Z\|\geq\E[\|Z\|]+u)\leq Ce^{-u^2/2}.
\]
Now choosing $Z=Y'$, $m=d-1$, and $u=u_d:=\Theta(\frac{\sqrt{d}}{\sqrt{f(d)}}\cdot s_d)+o(1)$, using also \eqref{eq:eq-16} we obtain 
\begin{equation}
 \sup_{t\geq t_{d,N}'}\Pr(\|Y'\|>t\cdot r_{d,N})\leq \Pr(\|Y'\|\geq\sqrt{d-1}+u_d)\leq Ce^{-u_d^2/2}=o(1)   
\end{equation}
where the last equality follows since $u_d\to\infty$. Combining \eqref{fubini-1} and \eqref{uniform-bound-sup}, we obtain \eqref{to-be-shown}:
\begin{equation}\label{cone-2}
    \gamma_d(A\cap\{x_d\geq t_{d,N}'\})=(1-o(1))\cdot\frac{1}{\sqrt{2\pi}}\int_{t_{d,N}'}^\infty e^{-t^2/2}\,dt=(1-o(1))\Phi(-t_{d,N}').
\end{equation}

In the next step, we prove that
\begin{equation}\label{Phi-cdf-tbd}
    \Phi(-t_{d,N}')=(1-o(1))\Phi(-t_{d,N})=\frac{1+o(1)}{N}.
\end{equation}
Define the Mills ratio $R(t):=\Phi(-t)/\phi(t)$ where $\phi(t)=(2\pi)^{-1/2}e^{-t^2/2}$. Then by the classical estimate (see, e.g., \cite{VershyninBook}) $\frac{1}{t}(1-\frac{1}{t^2})\leq R(t)\leq \frac{1}{t}$, we have $R(t)=\frac{1}{t}(1+\epsilon(t))$ with $|\epsilon(t)|\leq C/t^2$ for large $t$, and thus $\Phi(-t)=(1+\epsilon(t))\cdot\frac{\phi(t)}{t}$. Applying this estimate twice with $t=t_{d,N}'=t_{d,N}+s_d$ and $t=t_{d,N}$, we obtain
\begin{align*}
\frac{\Phi(-t_{d,N}')}{\Phi(-t_{d,N})}&=\frac{\phi(t_{d,N}+s_d)}{\phi(t_{d,N})}\cdot\frac{t_{d,N}}{t_{d,N}+s_d}\cdot\frac{1+\epsilon(t_{d,N}+s_d)}{1+\epsilon(t_{d,N})}\\
&=\exp\left(-t_{d,N}s_d-\frac{s_d^2}{2}\right)\cdot\frac{t_{d,N}}{t_{d,N}+s_d}\cdot\frac{1+\epsilon(t_{d,N}+s_d)}{1+\epsilon(t_{d,N})}.
\end{align*}
Taking logarithms, we get
\begin{align*}
    \ln\left(\frac{\Phi(-t_{d,N}')}{\Phi(-t_{d,N})}\right)=-t_{d,N}s_d-\frac{s_d^2}{2}+\ln\left(\frac{t_{d,N}}{t_{d,N}+s_d}\right)+\ln\left(\frac{1+\epsilon(t_{d,N}+s_d)}{1+\epsilon(t_{d,N})}\right).
\end{align*}
Since $\sqrt{f(d)}\cdot s_d\to 0$  and  $t_{d,N}\sim\sqrt{2f(d)}$, we have $s_d/t_{d,N}\to 0$. Hence,
\begin{align*}
    \ln\left(\frac{t_{d,N}}{t_{d,N}+s_d}\right)=-\ln\left(1+\frac{s_d}{t_{d,N}}\right)=-\frac{s_d}{t_{d,N}}+O\left(\frac{s_d^2}{t_{d,N}^2}\right)=o(1).
\end{align*}
Since $|\epsilon(t)|\leq C/t^2$, we have $\epsilon(t)\to 0$ as $t\to\infty$. Thus, for large $t$, $\ln(1+\epsilon(t))=\epsilon(t)+O(\epsilon(t)^2)$, and hence 
\[
\ln\left(\frac{1+\epsilon(t_{d,N}+s_d)}{1+\epsilon(t_{d,N})}\right)=\ln(1+\epsilon(t_{d,N}+s_d))-\ln(1+\epsilon(t_{d,N}))=O((t_{d,N}+s_d)^{-2})+O(t_{d,N}^{-2})=o(1).
\]
Therefore,
\begin{align*}
    \ln\left(\frac{\Phi(-t_{d,N}')}{\Phi(-t_{d,N})}\right)=-t_{d,N}s_d-\frac{s_d^2}{2}+o(1).
\end{align*}
Finally, since $t_{d,N}\sim\sqrt{2f(d)}$ and $\sqrt{f(d)}\cdot s_d\to 0$, we have $t_{d,N}s_d\to 0$. Moreover, $s_d\to 0$ follows from the hypotheses on $s_d$. Thus,
\begin{align*}
    \frac{\Phi(-t_{d,N}')}{\Phi(-t_{d,N})}=\exp\left(-t_{d,N}s_d-\frac{s_d^2}{2}+o(1)\right)=1+o(1).
\end{align*}
This proves \eqref{Phi-cdf-tbd}. Combining \eqref{cone-2} and \eqref{Phi-cdf-tbd}, we have thus shown that 
\begin{equation}
    \gamma_d(A\cap\{x_d\geq t_{d,N}'\})=\frac{1+o(1)}{N}.
\end{equation}
Let us note here that since $\gamma_d(A)=1/N$, we have $\gamma_d(A\cap\{x_d<t_{d,N}'\}) = o(1/N)$.

In the final step of the proof, we replace the cones by slab complements. Take any $x_1,\ldots,x_{N/2}\in\Sp$, and consider the ``antipodal'' cones $A(x_i)\cup A(-x_i)$.  Define the symmetric slab $K_i:=\{y:\,|\langle y,x_i\rangle|\leq t_{d,N}'\}$. Note that up to a set of Gaussian measure zero, for each $i$ we have $K_i^c=\{y:\,|\langle y,x_i\rangle|\geq t_{d,N}'\}$ and
\begin{align*}
    [A(x_i)\cup A(-x_i)] \subset K_i^c\cup\left([A(x_i)\cup A(-x_i)]\cap\{y:\,|\langle y,x_i\rangle|<t_{d,N}'\}\right).
\end{align*}
Taking the union over $i$, by the union bound and rotation invariance we obtain
\begin{align*}
    \gamma_d\left(\bigcup_{i=1}^{N/2}[A(x_i)\cup A(-x_i)]\right) &\leq \gamma_d\left(\bigcup_{i=1}^{N/2}K_i^c\right)+\sum_{i=1}^{N/2}\gamma_d\left([A(x_i)\cup A(-x_i)]\cap\{y:\,|\langle y,x_i\rangle|<t_{d,N}'\}\right)\\
    &\leq \gamma_d\left(\bigcup_{i=1}^{N/2}K_i^c\right)+\sum_{i=1}^{N/2}o(1/N)=\gamma_d\left(\bigcup_{i=1}^{N/2}K_i^c\right)+o(1).
\end{align*}
Moreover,
\[
\gamma_d(K_i^c)=\Pr(|\langle Y,x_i\rangle|\geq t_{d,N}')=2\Phi(-t_{d,N}')=\frac{2+o(1)}{N},
\]
so $\gamma_d(K_i)=1-\frac{2+o(1)}{N}$. Finally, by De Morgan's law and \v{S}id\'ak's lemma,
\begin{align*}
\gamma_d\left(\bigcup_{i=1}^{N/2}K_i^c\right)=1-\gamma_d\left(\bigcap_{i=1}^{N/2}K_i\right)\leq 1-\prod_{i=1}^{N/2}\gamma_d(K_i)=1-\left(1-\frac{2+o(1)}{N}\right)^{\frac{N}{2}}=1-e^{-1}+o(1).
\end{align*}
This completes the proof of the upper bound. \qed

	\subsection{A Technical Lemma and the Proof of Lemma \ref{Lem:Caps}}
    
 The estimate for $t_{d,N}$ in Lemma \ref{Lem:Caps} follows from standard estimates of order statistics (cf. \cite{LeadbetterLindgrenRootzen1983}). We present a slightly more general result below. 
    	\begin{lemma}\label{tdN-general}
		Let $t_{d,N}$ be such that $\Phi(-t_{d,N})=1/N$. Then, 
		\[
		t_{d,N}=\sqrt{2\ln N}\left[1-\frac{\ln\ln N}{4\ln N}-\frac{\ln(4\pi)}{4\ln N}+o\left(\frac{1}{\ln N}\right)\right].
		\]
	\end{lemma}

    We are now in a position to prove Lemma \ref{Lem:Caps}.

    	\vspace{2mm}
    
	\begin{proof}[Proof of Lemma \ref{Lem:Caps}] 
Let  $N:=N(d)\to\infty$, where $f(d)=\ln N(d)$. Assume that $f(d)\to\infty$ and $f(d)/\sqrt{d}\to 0$ as $d\to\infty$. Recall that $t_{d,N}>0$ is defined by $\Phi(-t_{d,N})=N^{-1}=e^{-f(d)}$. The estimate for $t_{d,N}$ follows directly from Lemma \ref{tdN-general} with $N=e^{f(d)}$ (which is justified since $f(d)\to\infty$ and $N=e^{f(d)}\to\infty$ as $d\to\infty$). Thus,
		\begin{align}\label{td-est}
			t_{d,N} = \sqrt{2f(d)}\left(1-\frac{\ln(4\pi f(d))}{4f(d)}+o(f(d)^{-1})\right)\quad\text{as }d\to\infty.
		\end{align}
        \noindent In particular, $t_{d,N}\sim\sqrt{2f(d)}$.
		
		Next, we derive the estimate		
        \[
		\delta_{d,N}:= |t_{d,N} - A_{d,N}| = \frac{f(d)^{3/2}}{\sqrt{2}\,d}
		+o\!\left(\frac{1}{\sqrt{f(d)}}+\frac{f(d)^{3/2}}{d}\right) = o(1/\sqrt{f(d)}).
		\]
        \begin{comment}Write $f(d)=\ln N$.   The standard Mills ratio estimate (see, e.g.,  \cite[Proposition 2.1.2]{VershyninBook}) yields
\[
\Phi(-t)=\frac{e^{-t^2/2}}{\sqrt{2\pi}\,t}
\left(1+O(t^{-2})\right)
\qquad (t\to\infty).
\]
Taking logarithms at $t=t_{d,N}$, we get
\[
f(d)=\frac{t_{d,N}^2}{2}+\ln t_{d,N}
+\frac{1}{2}\ln(2\pi)+O(t_{d,N}^{-2}).
\]
Applying Lemma~\ref{tdN-general} with $\ln N=f(d)$, we obtain
\[
t_{d,N}=\sqrt{2f(d)}
-\frac{\ln(4\pi f(d))}{2\sqrt{2f(d)}}
+o(f(d)^{-1/2}).
\]
\end{comment}
Recall that $\alpha_{d,N}\in(0,1)$ is defined by
\[
\sigma_{d-1}(\{u\in\Sp:\,\langle u,e_d\rangle\geq \alpha_{d,N}\})=e^{-f(d)},
\]
and set $A_{d,N}:=\alpha_{d,N}\sqrt d$. Using the formula for the volume of the spherical cap, we get
\begin{equation}\label{eq:lemma-2-main-eqn}
e^{-f(d)}=C(d)\int_{\alpha_{d,N}}^1 (1-x^2)^{\frac{d-3}{2}}\,dx,
\end{equation}
where
\[
C(d):=\frac{\vol_{d-2}(\partial B_{d-1})}{\vol_{d-1}(\partial B_d)}
=\frac{\Gamma(d/2)}{\sqrt\pi\,\Gamma((d-1)/2)}.
\]
By Stirling's formula,
\[
C(d)=\sqrt{\frac{d}{2\pi}}
\left(1-\frac{3}{4d}+O(d^{-2})\right).
\]
Hence,
\begin{equation}\label{eq:log-Cd-lemma-2}
\ln C(d)
=\frac{1}{2}\ln d-\frac{1}{2}\ln(2\pi)-\frac{3}{4d}+O(d^{-2}).
\end{equation}

Next, we record two elementary consequences. First, we claim that $\alpha_{d,N}\to 0$ as $d\to\infty$. 
Indeed, suppose that, along some subsequence, 
$\alpha_{d,N}\geq \varepsilon>0$. On one hand, by \cite[Proposition 5.1,
Equation (5.5)]{aubrun2017alice}, we have
\[
\begin{aligned}
e^{-f(d)}
&=\sigma_{d-1}\bigl(\{u\in\Sp:
\langle u,e_d\rangle\geq\alpha_{d,N}\}\bigr)\\
&\leq \sigma_{d-1}\bigl(\{u\in\Sp:
\langle u,e_d\rangle\geq\varepsilon\}\bigr)\\
&\leq \frac{1}{2}
\exp\left(-\frac{d}{2}(\arcsin\varepsilon)^2\right)
\leq \frac{1}{2}e^{-d\varepsilon^2/2}.
\end{aligned}
\]
On the other hand, $e^{-f(d)}=e^{-o(d)}$, which yields a contradiction for all sufficiently large $d$.

Second, we claim that 
\[
A_{d,N}=\alpha_{d,N}\sqrt d\longrightarrow\infty\quad\text{as}\quad d\to\infty.
\]
Suppose not.  Then along some subsequence, we have $A_{d,N}\leq M$ for some $M>0$.  Since
$\alpha_{d,N}\leq M/\sqrt d$, for all sufficiently large $d$ we have
\begin{equation}\label{eq:lower-bound-lemma-2-proof}
e^{-f(d)}
\geq C(d)\int_{M/\sqrt{d}}^{(M+1)/\sqrt{d}}
(1-x^2)^{\frac{d-3}{2}}\,dx.
\end{equation}
Using $C(d)\sim\sqrt{d/(2\pi)}$ and
\[
\left(1-\frac{(M+1)^2}{d}\right)^{\frac{d-3}{2}}\longrightarrow
e^{-(M+1)^2/2}\quad\text{as}\quad d\to\infty,
\]
the right-hand side of \eqref{eq:lower-bound-lemma-2-proof} is bounded below by a positive constant depending only on $M$.  This contradicts our assumption that $e^{-f(d)}\to 0$ as $d\to\infty$.  Thus
$A_{d,N}\to\infty$, and in particular
\begin{equation}\label{eq:Adn-squared-Lemma-2}
A_{d,N}^2=d\alpha_{d,N}^2\longrightarrow\infty\quad\text{as}\quad d\to\infty.
\end{equation}

Next, we estimate the integral \eqref{eq:lemma-2-main-eqn}. Define the function
\[
I_d(\alpha):=\int_\alpha^1 (1-x^2)^{\frac{d-3}{2}}\,dx.
\]
Then \eqref{eq:lemma-2-main-eqn} becomes $e^{-f(d)}=C(d)I_d(\alpha_{d,N})$. 
For $m=(d-3)/2$ and $g(x)=(1-x^2)^m$, we have
\[
g'(x)=-\left(\frac{2mx}{1-x^2}\right)g(x).
\]
Thus,
\[
g(x)=-\left(\frac{1-x^2}{2mx}\right)g'(x).
\]
Integrating by parts on $[\alpha,1]$, with the endpoint at $x=1$ understood as a limit, we obtain
\begin{align*}
I_d(\alpha)&=\int_\alpha^1 g(x)\,dx=-\int_\alpha^1\left(\frac{1-x^2}{2mx}\right)g'(x)\,dx\\
&=-\frac{1}{2m}\left(\frac{1-x^2}{x}\cdot g(x)\bigg|_\alpha^1+\int_\alpha^1 g(x)\left(\frac{1+x^2}{x^2}\right)dx\right)
=\frac{(1-\alpha^2)^{m+1}}{2m\alpha}
-\frac{1}{2m}\int_\alpha^1
\frac{1+x^2}{x^2}(1-x^2)^m\,dx.
\end{align*}
Set $L_d(\alpha):=\frac{(1-\alpha^2)^{m+1}}{2m\alpha}$ and
\[
J_d(\alpha):=\frac{1}{2m}\int_\alpha^1
\frac{1+x^2}{x^2}(1-x^2)^m\,dx.
\]
Note that $I_d(\alpha)=L_d(\alpha)-J_d(\alpha)$. Since $\alpha\leq x\leq 1$, we have $\frac{1+x^2}{x^2}\leq\frac{2}{\alpha^2}$ for all $x\in[\alpha,1]$. Thus,
\[
0\leq J_d(\alpha)\leq
\frac{I_d(\alpha)}{m\alpha^2}=\varepsilon_d I_d(\alpha)
\]
where $\varepsilon_d:=(m\alpha^2)^{-1}$. Hence,
\[
L_d(\alpha)-\varepsilon_d I_d(\alpha)\leq I_d(\alpha)\leq L_d(\alpha).
\]
The left inequality implies $L_d(\alpha)\leq(1+\varepsilon_d)I_d(\alpha)$, and since $J_d(\alpha)\geq 0$ we have $I_d(\alpha)=L_d(\alpha)-J_d(\alpha)\leq L_d(\alpha)$. Combining these inequalities, we derive that
\[
\frac{L_d(\alpha)}{1+\varepsilon_d}\leq I_d(\alpha)\leq L_d(\alpha).
\]

Now let $\alpha=\alpha_{d,N}$. By \eqref{eq:Adn-squared-Lemma-2}, we have $\varepsilon_d^{-1}=m\alpha_{d,N}^2\to\infty$ as $d\to\infty$, which implies $\varepsilon_d\to 0$ as $d\to\infty$. Thus,
\[
\frac{1}{1+\varepsilon_d}=1+O(\varepsilon_d)=1+O\left(\frac{1}{d\alpha_{d,N}^2}\right)=1+O(A_{d,N}^{-2})
\]
and hence
\[
I_d(\alpha_{d,N})
=L_d(\alpha_{d,N})\left(1+O(A_{d,N}^{-2})\right)=\frac{(1-\alpha_{d,N}^2)^{\frac{d-1}{2}}}
     {(d-3)\alpha_{d,N}}
\left(1+O(A_{d,N}^{-2})\right).
\]
Therefore,
\begin{equation}\label{eq:Id-final-lemma-2}
e^{-f(d)}
=C(d)\cdot\frac{(1-\alpha_{d,N}^2)^{\frac{d-1}{2}}}
     {(d-3)\alpha_{d,N}}
\left(1+O(A_{d,N}^{-2})\right).
\end{equation}
Taking logarithms of both sides, we get
\[
f(d)
=-\frac{d-1}{2}\ln(1-\alpha_{d,N}^2)
+\ln(d-3)+\ln\alpha_{d,N}
-\ln C(d)+\ln(1+O(A_{d,N}^{-2})).
\]
By \eqref{eq:Adn-squared-Lemma-2}, we have $A_{d,N}^{-2}\to 0$ as $d\to\infty$, so $\ln(1+O(A_{d,N}^{-2}))=O(A_{d,N}^{-2})$. Hence,
\[
f(d)
=-\frac{d-1}{2}\ln(1-\alpha_{d,N}^2)
+\ln(d-3)+\ln\alpha_{d,N}
-\ln C(d)+O(A_{d,N}^{-2}).
\]
Substituting $\alpha_{d,N}=A_{d,N}/\sqrt d$ and using \eqref{eq:log-Cd-lemma-2}, we get  
\[
\ln(d-3)+\ln\alpha_{d,N}-\ln C(d)
=\ln A_{d,N}+\frac{1}{2}\ln(2\pi)+O(d^{-1}),
\]
which yields
\begin{equation}\label{eq:lemma-2-fd-lower-bd}
f(d)
=-\frac{d-1}{2}\ln\left(1-\frac{A_{d,N}^2}{d}\right)+\ln A_{d,N}
+\frac{1}{2}\ln(2\pi)
+O(A_{d,N}^{-2})+O(d^{-1}).
\end{equation}

We now show that $A_{d,N}^2=O(f(d))$.  Using the inequality
$-\ln(1-x)\geq x$ for $0<x<1$, we obtain
\[
f(d)\geq
\frac{d-1}{2}\frac{A_{d,N}^2}{d}
+\ln A_{d,N}
+\frac{1}{2}\ln(2\pi)
+O(A_{d,N}^{-2})+O(d^{-1})\geq \frac{d-1}{2d}\cdot A_{d,N}^2
+\ln A_{d,N}-C
\]
for some positive absolute constant $C$. As, by \eqref{eq:Adn-squared-Lemma-2}, we have $A_{d,N}\to\infty$, the logarithmic term is nonnegative for all sufficiently
large $d$. Moreover, $\frac{d-1}{2d}\geq \frac{1}{4}$ for all $d\geq 2$, so for all sufficiently large $d$ we have
\[
f(d) \geq \frac{A_{d,N}^2}{4}-C,
\]
i.e., $A_{d,N}^2\leq 4(f(d)+C)$. This proves that $A_{d,N}^2=O(f(d))$. In particular,
\[
\frac{A_{d,N}^2}{d}=O\left(\frac{f(d)}{d}\right)\longrightarrow 0\quad\text{as }d\to\infty,
\]
and
\[
\frac{A_{d,N}^6}{d^2}=O\left(\frac{f(d)^3}{d^2}\right)=o(1),
\]
because $f(d)=o(\sqrt{d})$. Expanding the logarithm in \eqref{eq:lemma-2-fd-lower-bd}, we derive that
\[
-\frac{d-1}{2}\ln\left(1-\frac{A_{d,N}^2}{d}\right)
=\frac{A_{d,N}^2}{2}
+\frac{A_{d,N}^4}{4d}
+O\left(\frac{A_{d,N}^2}{d}\right)
+O\left(\frac{A_{d,N}^6}{d^2}\right).
\]
As shown above, the two error terms are $o(1)$.  Hence,
\begin{equation}\label{eq:lemma-2-fd-est-final}
f(d)=\frac{A_{d,N}^2}{2}+\ln A_{d,N}
+\frac{1}{2}\ln(2\pi)
+\frac{A_{d,N}^4}{4d}+o(1).
\end{equation}

Define the function
\[
\Psi_d(A):=
\frac{A^2}{2}+\ln A+\frac{1}{2}\ln(2\pi)
+\frac{A^4}{4d}.
\]
Note that  $\Psi_d'(A)=A+A^{-1}+A^3/d>0$ since $A>0$. Set $T_d:=\sqrt{2f(d)}$.  The preceding estimate \eqref{eq:lemma-2-fd-est-final} says
\[
\Psi_d(A_{d,N})=f(d)+o(1).
\]
Thus,
\[
\Psi_d(A_{d,N})-\Psi_d(T_d)
=f(d)-\Psi_d(T_d)+o(1).
\]
By the mean value theorem, there exists $\xi_d$ between $A_{d,N}$
and $T_d$ such that
\[
\Psi_d'(\xi_d)(A_{d,N}-T_d)
=f(d)-\Psi_d(T_d)+o(1).
\]
We first note that $A_{d,N}\sim T_d$. Indeed, from
\[
f(d)=\frac{A_{d,N}^2}{2}+O(\ln A_{d,N})+O(A_{d,N}^4/d)+o(1)
\]
and $A_{d,N}^2=O(f(d))$, it follows that $A_{d,N}^2=2f(d)+o(f(d))$. Hence $A_{d,N}\sim T_d$, and therefore $\xi_d\sim T_d$. Consequently,
\[
\Psi_d'(\xi_d)=\sqrt{2f(d)}(1+o(1)).
\]
Note that this implies
\[
A_{d,N}-T_d
=O\left(\frac{\ln f(d)+f(d)^2/d+1}{\sqrt{f(d)}}\right)
=O\left(\frac{\ln f(d)}{\sqrt{f(d)}}\right)
=o(T_d).
\] 

Applying Taylor's formula at $A=T_d$, we derive that
\[
\Psi_d(A_{d,N})
=\Psi_d(T_d)
+\Psi_d'(T_d)(A_{d,N}-T_d)
+O((A_{d,N}-T_d)^2),
\]
because $\Psi_d''(A)=1-A^{-2}+3A^2/d$ is bounded on the interval between $A_{d,N}$ and $T_d$. Since
\[
A_{d,N}-T_d=O\left(\frac{\ln f(d)}{\sqrt{f(d)}}\right),
\]
we have
\[
(A_{d,N}-T_d)^2
=O\left(\frac{(\ln f(d))^2}{f(d)}\right).
\]
Dividing this error by $\Psi_d'(T_d)\sim \sqrt{2f(d)}$ gives an error of
\[
O\left(\frac{(\ln f(d))^2}{f(d)^{3/2}}\right)
=o(f(d)^{-1/2}).
\]
Therefore,
\begin{equation}\label{eq:lemma-2-Taylor}
A_{d,N}=T_d-\frac{\Psi_d(T_d)-f(d)}{\Psi_d'(T_d)}+o(f(d)^{-1/2}).
\end{equation}

Set $f=f(d)$, $L=\ln(4\pi f)$, and $\epsilon_d:=\frac{1}{2f}+\frac{2f}{d}$. 
Then $\epsilon_d\to 0$ as $d\to\infty$, and
\[
\frac{\Psi_d(T_d)-f}{\Psi_d'(T_d)}
=\frac{\frac{1}{2}L+f^2/d}{\sqrt{2f}(1+\epsilon_d)}.
\]
Since $(1+\epsilon_d)^{-1}=1+O(\epsilon_d)$, we have
\[
\frac{\Psi_d(T_d)-f}{\Psi_d'(T_d)}
=\left(\frac{L}{2\sqrt{2f}}
+\frac{f^{3/2}}{\sqrt2\,d}\right)
(1+O(\epsilon_d)).
\]
The error term is
\begin{align*}
\left(\frac{L}{2\sqrt{2f}}
+\frac{f^{3/2}}{\sqrt2\,d}\right)
O(\epsilon_d)&=O\left[
\left(\frac{L}{\sqrt f}+\frac{f^{3/2}}{d}\right)
\left(\frac{1}{f}+\frac{f}{d}\right)
\right]=O\left(\frac{L}{f^{3/2}}+\frac{L\sqrt{f}}{d}
+\frac{\sqrt{f}}{d}+\frac{f^{5/2}}{d^2}\right)
=o(f^{-1/2}),
\end{align*}
where we used $L=o(f)$ and $f=o(\sqrt{d})$. Therefore,
\[
\frac{\Psi_d(T_d)-f}{\Psi_d'(T_d)}
=\frac{L}{2\sqrt{2f}}+\frac{f^{3/2}}{\sqrt{2}\,d}+o(f^{-1/2}).
\]
Substituting this into the Taylor estimate \eqref{eq:lemma-2-Taylor} and using $T_d=\sqrt{2f}$, we get
\[
A_{d,N}=\sqrt{2f}-\frac{L}{2\sqrt{2f}}
-\frac{f^{3/2}}{\sqrt{2}\,d}+o(f^{-1/2}).
\]
This says that
\[
\alpha_{d,N}\sqrt{d}
=\sqrt{2f(d)}
-\frac{\ln(4\pi f(d))}{2\sqrt{2f(d)}}
-\frac{f(d)^{3/2}}{\sqrt{2}\,d}+o(f(d)^{-1/2}).
\]
By Lemma~\ref{tdN-general},
\[
t_{d,N}=\sqrt{2f(d)}-\frac{\ln(4\pi f(d))}{2\sqrt{2f(d)}}+o(f(d)^{-1/2}).
\]
Consequently,
\[
t_{d,N}-\alpha_{d,N}\sqrt{d}
=\frac{f(d)^{3/2}}{\sqrt{2}\,d}
+o(f(d)^{-1/2}),
\]
and therefore
\[
\delta_{d,N}
=
\left|t_{d,N}-\alpha_{d,N}\sqrt{d}\right|
=\frac{f(d)^{3/2}}{\sqrt{2}\,d}
+o(f(d)^{-1/2}).
\]
Since $f(d)=o(\sqrt d)$, this also implies $\delta_{d,N}=o(f(d)^{-1/2})$. This completes the proof of Lemma 2. 
	\end{proof}
    
\section*{Acknowledgments}
	This material is based upon work supported by the National Science Foundation under Grant No. DMS-1929284 while SH was in residence at the Institute for Computational and Experimental Research in Mathematics in Providence, RI, during the Harmonic Analysis and Convexity program. GK deeply acknowledges his invaluable mentor, Prof. Emanuel Milman, for the useful discussions and for suggesting key ideas in this proof, and Prof. Adam R. Klivans for sharing recent approximation results in the Gaussian space of convex bodies. We would also like to thank the anonymous referee for a careful reading of the manuscript and for helpful suggestions that improved the paper.
    
\bibliography{main}

\vspace{3mm}
	
	\noindent {\sc Department of Mathematics \& Computer Science, Longwood University, U.S.A.}
	
	\noindent {\it E-mail address:} {\tt hoehnersd@longwood.edu}
	
	\vspace{3mm}
	
	\noindent{\sc Departments of Computer Science and Mathematics, ETH Zürich, Switzerland}
	
	\noindent {\it E-mail address:} {\tt gil.kur@inf.ethz.ch}
    %%%%%%%%%%%%%%%%%%%%%%%%%%%%%
    \appendix

\newpage
\section{Appendix}\label{S:A}
\subsection{Lower Bound}
	To prove the lower bound, we use random antipodal partial coverings. We follow the argument in \cite{HK-DCG}, with slight modifications to account for antipodality.  We include the details for the reader's convenience. 
	
	Choose a sequence of points $X_1,X_2,\ldots$ randomly and independently from $\Sp$ according to the uniform distribution, and for every even $N=N(d)$ satisfying the hypotheses of Theorem \ref{mainThm-corrected}, writing $f(d)=\ln N(d)$, consider the random variable 
	\[
	\mathbb{V}(X_1,\ldots,X_{N/2}):=\sigma_{d-1}\left(\bigcup_{i=1}^{N/2}[C(X_i)\cup C(-X_i)]\right).
	\]
	Each point $X_i$ uniquely determines a cap of the sphere centered at $X_i$ with normalized surface measure $1/N$. 
	%Let $h_{f(d)}$ denote the height of such a cap, i.e., $h_{f(d)}:=1-\alpha_{d,N}$. 
    By Fubini's theorem and the fact that the $X_i$ are independent and identically distributed, 
	\begin{align*}
		\max_{\pm x_1,\ldots,\pm x_{N/2}\in\Sp}&\left\{\sigma_{d-1}\left(\bigcup_{i=1}^{N/2} [C(x_i)\cup C(-x_i)]\right):\,\sigma_{d-1}(C(x_i))=1/N,\,i=1,\ldots,N/2\right\}\\
		&\geq \E\left[\mathbb{V}(X_1,\ldots,X_{N/2})\right]
		=\E\left[\int_{\Sp}\left(1-\mathbbm{1}_{\max\langle \pm X_i,x\rangle\leq \alpha_{d,N}}(x)\right)d\sigma_{d-1}(x)\right]\\
        &=
\int_{\Sp}
\mathbb{P}\left(
x\in\bigcup_{i=1}^{N/2}[C(X_i)\cup C(-X_i)]
\right)\,d\sigma_{d-1}(x)\\
&=1-\int_{\Sp}
\prod_{i=1}^{N/2}
\mathbb{P}\left(x\notin C(X_i)\cup C(-X_i)\right)
\,d\sigma_{d-1}(x).
		%&=1-\int_{\Sp}\prod_{i=1}^{N/2}\mathbb{P}\left(x \notin C(X_i) \cup C(-X_i)\right)d\sigma_{d-1}(x)
		%=1-\left(1-\frac{2}{N}\right)^{N/2}\\
		%&=1-e^{\frac{N}{2}\ln(1-2/N)}
		%=1-e^{-1}+\frac{1}{2}e^{-1}N^{-1}+O(N^{-2})=1-e^{-1}+\frac{1}{2}e^{-(f(d)+1)}+O(e^{-2f(d)}).
	\end{align*}
In the inequality, we used the fact that some realization of the random configuration attains at least the expected covered measure.  For each fixed $x\in\Sp$, we have
\[
\mathbb{P}\left(x\in C(X_i)\cup C(-X_i)\right)=\frac{2}{N},
\]
because each of the two antipodal caps has measure $1/N$ and the boundary has measure zero. Hence,
\[
\E\left[\mathbb{V}(X_1,\ldots,X_{N/2})\right]
=1-\left(1-\frac{2}{N}\right)^{N/2}=1-e^{-1}+e^{-1}N^{-1}+O(N^{-2}).
\]
Since $e^{-f(d)}\to 0$ as $d\to\infty$, we obtain the lower bound
	\[
	\lim_{d\to\infty}\max_{\pm x_1,\ldots,\pm x_{N/2}\in\Sp}\left\{\sigma_{d-1}\left(\bigcup_{i=1}^{N/2} [C(x_i)\cup C(-x_i)]\right):\,\sigma_{d-1}(C(x_i))=1/N,\,i=1,\ldots,N/2\right\}\geq 1-e^{-1}.
	\]
    This proves the lower bound. \qed
    
\subsection{Proof of Lemma \ref{tdN-general}}
		For $z>0$, we use the following well-known estimate for the Mills ratio of a standard normal random variable (see, e.g.,  \cite[Proposition 2.1.2]{VershyninBook}): \begin{equation}\label{gaussian-tails-Mills}
			\left(\frac{1}{z}-\frac{1}{z^3}\right)\frac{1}{\sqrt{2\pi}}\, e^{-z^2/2}\leq %\gamma_1([z,\infty))=
			\Phi(-z)\leq \frac{1}{z}\cdot\frac{1}{\sqrt{2\pi}}\, e^{-z^2/2}.
		\end{equation}  
		Setting $z:=t_{d,N}$ in \eqref{gaussian-tails-Mills}, we get
		\begin{equation}\label{mills-to-do}
			\frac{1}{z}\left(1-\frac{1}{z^2}\right)e^{-z^2/2}\leq \frac{\sqrt{2\pi}}{N}\leq \frac{1}{z}\cdot e^{-z^2/2}.
		\end{equation}
		Let us first consider the equation corresponding to the upper bound. Taking logarithms, we get
		\[
		\ln\sqrt{2\pi}-\ln N =-\ln z-\frac{z^2}{2}.
		\]
		Multiplying both sides by $-2$, we get $2\ln N-\ln(2\pi) =\ln(z^2)+z^2$. Let $y:=z^2$ and $C:=\ln(2\pi)$. Then $y+\ln y=2\ln N-C$. Exponentiating both sides of the last equation, we obtain $ye^y=N^2/e^C$. 
		By the definition of the Lambert W function, this implies $y = W\left(\frac{N^2}{e^C}\right)=W\left(\frac{N^2}{2\pi}\right)$. 
		
		For large $x$, we have the following asymptotic estimate (see, e.g., \cite[Theorem 2.7]{Hoorfar-Hassani}): \[
		W(x) = \ln x-\ln(\ln x)+O\left(\frac{\ln(\ln x)}{\ln x}\right).
		\]
		Take $x=x(N):=\frac{N^2}{2\pi}$. Note that $x(N)\to\infty$ as $N\to\infty$, and $\ln x(N) =2\ln N-\ln(2\pi)$. 
		Thus,
		\begin{align*}
			\ln(\ln x(N)) &=\ln\left(2\ln N-\ln(2\pi)\right)
			=\ln\left(2\ln N\left[1-\frac{\ln(2\pi)}{2\ln N}\right]\right)\\
			&=\ln(2\ln N)+\ln\left(1-\frac{\ln(2\pi)}{2\ln N}\right)
			=\ln(2\ln N)-\frac{\ln(2\pi)}{2\ln N}+O\left(\frac{1}{(\ln N)^2}\right)
		\end{align*}
		where in the last line we used $\ln(1-\epsilon)=-\epsilon+O(\epsilon^2)$. Hence,
		\begin{align*}
			y &=W(x(N)) =\ln x(N)-\ln(\ln x(N))+O\left(\frac{\ln(\ln x(N))}{\ln x(N)}\right)\\
			&=2\ln N-\ln(2\ln N)-\ln(2\pi)+\frac{\ln(2\pi)}{2\ln N}-O\left(\frac{1}{(\ln N)^2}\right)\\
			&=2\ln N\left[1-\left(\frac{\ln\ln N}{2\ln N}+\frac{\ln(2\pi)+\ln 2}{2\ln N}-\frac{\ln(2\pi)}{2(\ln N)^2}+O\left(\frac{1}{(\ln N)^3}\right)\right)\right].
		\end{align*}
		Therefore,
		\begin{align*}
			t_{d,N}=z&=\sqrt{y}=\sqrt{2\ln N}\sqrt{1-\left(\frac{\ln(\ln N)}{2\ln N}+\frac{\ln(4\pi)}{2\ln N}-\frac{\ln(2\pi)}{2(\ln N)^2}+O\left(\frac{1}{(\ln N)^3}\right)\right)}\\
			&=\sqrt{2\ln N}\left[1-\frac{\ln\ln N}{4\ln N}-\frac{\ln(4\pi)}{4\ln N}+O\left(\frac{\ln\ln N}{(\ln N)^2}\right)\right].
		\end{align*}
		
		The lower bound in \eqref{mills-to-do} is handled in the same way. The additional factor $1-z^{-2}$ contributes only
\[
\ln(1-z^{-2})=O(z^{-2})=O((\ln N)^{-1})
\]
to the logarithmic equation for $z^2$, and therefore changes $z$ only by
$O((\ln N)^{-3/2})$. This may also be absorbed into an $o((\ln N)^{-1})$ relative error. Hence, the same asymptotic expansion holds for $t_{d,N}$ in this case as well. \qed
\end{document}